\DeclareMathOperator{\Cl}{Cl}
\DeclareMathOperator{\Con}{Con}
\DeclareMathOperator{\BDed}{\mathbf{Ded}}
\newtheorem{theorem}{Theorem}[section]
\newtheorem{definition}[theorem]{Definition}
\newtheorem{lemma}[theorem]{Lemma}
\newtheorem{proposition}[theorem]{Proposition}
\newtheorem{example}[theorem]{Example}
\newtheorem{corollary}[theorem]{Corollary}
\title{Operators on complemented lattices}
\author{Ivan~Chajda and Helmut~L\"anger$^1$}
\date{}
\begin{document}
	
\footnotetext[1]{Corresponding author}

\footnotetext[2]{Support of the research of the first author by the Czech Science Foundation (GA\v CR), project 24-14386L, entitled ``Representation of algebraic semantics for substructural logics'', and by IGA, project P\v rF~2024~011, is gratefully acknowledged.}
	
\maketitle
	
\begin{abstract}
The present paper deals with complemented lattices where, however, a unary operation of complementation is not explicitly assumed. This means that an element can have several complements. The mapping $^+$ assigning to each element $a$ the set $a^+$ of all its complements is investigated as an operator on the given lattice. We can extend the definition of $a^+$ in a natural way from elements to arbitrary subsets. In particular we study the set $a^+$ for complemented modular lattices, and we characterize when the set $a^{++}$ is a singleton. By means of the operator $^+$ we introduce two other operators $\to$ and $\odot$ which can be considered as implication and conjunction in a certain propositional calculus, respectively. These two logical connectives are ``unsharp'' which means that they assign to each pair of elements a non-empty subset. However, also these two derived operators share a lot of properties with the corresponding logical connectives in intuitionistic logic or in the logic of quantum mechanics. In particular, they form an adjoint pair. Finally, we define so-called deductive systems and we show their relationship to the mentioned operators as well as to lattice filters.
\end{abstract}
	
{\bf AMS Subject Classification:} 06C15, 06C05, 06C20
	
{\bf Keywords:} Complemented lattice, modular lattice, operator of complementation, Sasaki projection, filter, deductive system

\section{Introduction}

Let $\mathbf L=(L,\vee,\wedge,0,1)$ be a bounded lattice and $a\in L$. An element $b$ of $L$ is called a {\em complement} of $a$ if $a\vee b=1$ and $a\wedge b=0$. The lattice $\mathbf L$ is called {\em complemented} if any of its elements has a complement.

Often lattices with an additional unary operation, usually denoted by $'$, are studied where for each $a\in L$ the element $a'$ denotes its complement. In such a case this unary operation is called a {\em complementation}. However, in complemented lattices we do not assume the complement being unique. This is the case with our present paper.

It is worth noticing that in a distributive complemented lattice the complement is unique. However, this need not be the case in modular complemented lattices. For example, consider the lattice $\mathbf M_n=(M_n,\vee,\wedge,0,1)$ (for $n>1$) depicted in Figure~1:

\vspace*{-3mm}

\begin{center}
	\setlength{\unitlength}{7mm}
	\begin{picture}(8,6)
		\put(4,1){\circle*{.3}}
		\put(1,3){\circle*{.3}}
		\put(3,3){\circle*{.3}}
		\put(7,3){\circle*{.3}}
		\put(4,5){\circle*{.3}}
		\put(4,1){\line(-3,2)3}
		\put(4,1){\line(-1,2)1}
		\put(4,1){\line(3,2)3}
		\put(4,5){\line(-3,-2)3}
		\put(4,5){\line(-1,-2)1}
		\put(4,5){\line(3,-2)3}
		\put(3.85,.3){$0$}
		\put(.25,2.85){$a_1$}
		\put(2.25,2.85){$a_2$}
		\put(4.5,2.85){$\cdots$}
		\put(7.3,2.85){$a_n$}
		\put(3.85,5.4){$1$}
		\put(3.2,-.75){{\rm Fig.~1}}
		\put(2,-1.75){The lattice $\mathbf M_n$}
	\end{picture}
\end{center}

\vspace*{10mm}

Then for every $i,j\in\{1,\ldots,n\}$ with $i\ne j$, the element $a_j$ is a complement of $a_i$.

Sometimes, for lattices with complementation, we ask if this complementation is {\em antitone}, i.e.\ if $x\le y$ implies $y'\le x'$, or if it is an {\em involution}, i.e.\ $x''=x$. In distributive complemented lattices the complementation turns out to be unique, antitone and an involution. In such a case the lattice is a Boolean algebra.

Within modular lattices the situation may be different. Consider the complemented modular lattice $\mathbf L=(L,\vee,\wedge,0,1)$ visualized in Figure~2:

\vspace*{-3mm}

\begin{center}
	\setlength{\unitlength}{7mm}
	\begin{picture}(14,8)
		\put(4,1){\circle*{.3}}
		\put(1,3){\circle*{.3}}
		\put(3,3){\circle*{.3}}
		\put(5,3){\circle*{.3}}
		\put(7,3){\circle*{.3}}
		\put(10,3){\circle*{.3}}
		\put(4,5){\circle*{.3}}
		\put(7,5){\circle*{.3}}
		\put(9,5){\circle*{.3}}
		\put(11,5){\circle*{.3}}
		\put(13,5){\circle*{.3}}
		\put(10,7){\circle*{.3}}
		\put(4,1){\line(-3,2)3}
		\put(4,1){\line(-1,2)1}
		\put(4,1){\line(1,2)1}
		\put(4,1){\line(3,2)3}
		\put(10,3){\line(-3,2)3}
		\put(10,3){\line(-1,2)1}
		\put(10,3){\line(1,2)1}
		\put(10,3){\line(3,2)3}
		\put(4,5){\line(-3,-2)3}
		\put(4,5){\line(-1,-2)1}
		\put(4,5){\line(1,-2)1}
		\put(4,5){\line(3,-2)3}
		\put(10,7){\line(-3,-2)3}
		\put(10,7){\line(-1,-2)1}
		\put(10,7){\line(1,-2)1}
		\put(10,7){\line(3,-2)3}
		\put(4,1){\line(3,1)6}
		\put(1,3){\line(3,1)6}
		\put(3,3){\line(3,1)6}
		\put(5,3){\line(3,1)6}
		\put(7,3){\line(3,1)6}
		\put(4,5){\line(3,1)6}
		\put(3.85,.3){$0$}
		\put(.4,2.85){$a$}
		\put(2.4,2.85){$b$}
		\put(5.3,2.85){$c$}
		\put(7.3,2.85){$d$}
		\put(10.3,2.85){$e$}
		\put(3.4,4.85){$f$}
		\put(6.4,4.85){$g$}
		\put(8.4,4.85){$h$}
		\put(11.3,4.85){$i$}
		\put(13.3,4.85){$j$}
		\put(9.85,7.4){$1$}
		\put(6.2,-.75){{\rm Fig.~2}}
		\put(3,-1.75){Complemented modular lattice}
	\end{picture}
\end{center}

\vspace*{10mm}

Evidently; $\mathbf L$ is a complemented lattice. We have several choices for defining a complementation $'$. If we define $'$ by
\[
\begin{array}{l|cccccccccccc}
	x  & 0 & a & b & c & d & e & f & g & h & i & j & 1 \\
	\hline
	x' & 1 & h & i & j & g & f & e & b & c & d & a & 0
\end{array}
\]
then it is not an involution. If we define $'$ by
\[
\begin{array}{l|cccccccccccc}
	x  & 0 & a & b & c & d & e & f & g & h & i & j & 1 \\
	\hline
	x' & 1 & h & i & j & g & f & e & d & a & b & c & 0
\end{array}
\]
then it is an antitone involution and hence $\mathbf L=(L,\vee,\wedge,{}',0,1)$ is a so-called {\em orthomodular lattice} (see e.g.\ \cite{Be} for the definition).

Hence, not every modular lattice endowed with a complementation must be orthomodular. Of course, not every orthomodular lattice is modular (see \cite{Be}).

If $\mathbf L=(L,\vee,\wedge,0,1)$ is a complemented lattice in which the complementation is not introduced in form of a unary operation then we need not distinguish between the complements of a given element $a$ of $L$. Hence we will work with the whole set of complements of $a$. Within this paper we will use this approach.

We start by introducing some lattice-theoretical concepts.

All complemented lattices considered within this paper are assumed to be non-trivial, i.e.\ to have a bottom element $0$ and a top element $1$ with $0\ne1$.

Let $(L,\vee,\wedge,0,1)$ be a complemented lattice and $A,B\subseteq L$. We define:
\begin{align*}
  A\vee B & :=\{x\vee y\mid x\in A\text{ and }y\in B\}, \\
A\wedge B & :=\{x\wedge y\mid x\in A\text{ and }y\in B\}, \\
   A\le B & \text{ if }x\le y\text{ for all }x\in A\text{ and all }y\in B, \\
  A\le_1B & \text{ if for every }x\in A\text{ there exists some }y\in B\text{ with }x\le y, \\
  A\le_2B & \text{ if for every }y\in B\text{ there exists some }x\in A\text{ with }x\le y.
\end{align*}

\section{The operator $^+$}
	
Let $\mathbf L=(L,\vee,\wedge,0,1)$ be a complemented lattice. For $a\in L$ we define
\[
a^+:=\{x\in L\mid a\vee x=1\text{ and }a\wedge x=0\},
\]
i.e.\ $a^+$ is the set of all complements of $a$. Since $\mathbf L$ is complemented, we have $a^+\ne\emptyset$ for all $a\in L$. For every subset $A$ of $L$ we put
\[
A^+:=\{x\in L\mid a\vee x=1\text{ and }a\wedge x=0\text{ for all }a\in A\}.
\]
Observe that $A^+$ may me empty, e.g.\ $L^+=\emptyset$ (and $\emptyset^+=L$). In the following we often identify singletons with their unique element.

\begin{example}\label{ex1}
	For the lattice $\mathbf N_5$ depicted in Figure~3:

\vspace*{-3mm}

\begin{center}
	\setlength{\unitlength}{7mm}
	\begin{picture}(4,8)
		\put(2,1){\circle*{.3}}
		\put(1,4){\circle*{.3}}
		\put(3,3){\circle*{.3}}
		\put(3,5){\circle*{.3}}
		\put(2,7){\circle*{.3}}
		\put(2,1){\line(-1,3)1}
		\put(2,1){\line(1,2)1}
		\put(3,3){\line(0,1)2}
		\put(2,7){\line(-1,-3)1}
		\put(2,7){\line(1,-2)1}
		\put(1.85,.3){$0$}
		\put(3.4,2.85){$a$}
		\put(.35,3.85){$b$}
		\put(3.4,4.85){$c$}
		\put(1.85,7.4){$1$}
		\put(1.2,-.75){{\rm Fig.~3}}
		\put(-1.1,-1.75){Non-modular lattice $\mathbf N_5$}
	\end{picture}
\end{center}

\vspace*{8mm}

we have
\[
\begin{array}{l|r|r|r|r|r}
	x      & 0 &  a &  b &  c & 1 \\
	\hline
    x^+    & 1 &  b & ac &  b & 0 \\
	\hline
	x^{++} & 0 & ac &  b & ac & 1
\end{array}
\]
Here and in the following within tables we sometimes write $abc$ instead of $\{a,b,c\}$.
	For the lattice $\mathbf M_3$ visualized in Figure~4

\vspace*{-3mm}

\begin{center}
	\setlength{\unitlength}{7mm}
	\begin{picture}(6,6)
		\put(3,1){\circle*{.3}}
		\put(1,3){\circle*{.3}}
		\put(3,3){\circle*{.3}}
		\put(5,3){\circle*{.3}}
		\put(3,5){\circle*{.3}}
		\put(3,1){\line(-1,1)2}
		\put(3,1){\line(0,1)4}
		\put(3,1){\line(1,1)2}
		\put(3,5){\line(-1,-1)2}
		\put(3,5){\line(1,-1)2}
		\put(2.85,.3){$0$}
		\put(.35,2.85){$a$}
		\put(3.4,2.85){$b$}
		\put(5.4,2.85){$c$}
		\put(2.85,5.4){$1$}
		\put(2.2,-.75){{\rm Fig.~4}}
		\put(.5,-1.75){Modular lattice $\mathbf M_3$}
	\end{picture}
\end{center}

\vspace*{8mm}

we have
\[
\begin{array}{r|r|r|r|r|r}
	x      & 0 &  a &  b &  c & 1 \\
	\hline
	x^+    & 1 & bc & ac & ab & 0 \\
	\hline
	x^{++} & 0 &  a &  b &  c & 1
\end{array}
\]
Let us note that $\mathbf M_3$ satisfies the identity $x^{++}\approx x$.
\end{example}

\begin{example}
	For the example from Figure~2 we have
	\[
	\begin{array}{l|r|r|r|r|r|r|r|r|r|r|r|r}
		x      & 0 &   a &   b &   c &   d & e & f &   g &   h &   i &   j & 1 \\
		\hline
		x^+    & 1 & hij & gij & ghj & ghi & f & e & bcd & acd & abd & abc & 0 \\
		\hline
		x^{++} & 0 &   a &   b &   c &   d & e & f &   g &   h &   i &   j & 1
	\end{array}
	\]
\end{example}

Recall the concept of a {\em Galois connection} which is often used in lattices. The pair $(^+,^+)$ is the Galois connection between $(2^L,\subseteq)$ and $(2^L,\subseteq)$ induced by the relation
\[
\{(x,y)\in L^2\mid x\vee y=1\text{ and }x\wedge y=0\}.
\]
From this we conclude
\begin{align*}
	             A & \subseteq A^{++}, \\
	  A\subseteq B & \Rightarrow B^+\subseteq A^+, \\
	       A^{+++} & =A^+, \\
	A\subseteq B^+ & \Leftrightarrow B\subseteq A^+
\end{align*}
for all $A,B\subseteq L$. Since $A\subseteq A^{++}$ we have that $A^{++}\ne\emptyset$ whenever $A\ne\emptyset$. A subset $A$ of $L$ is called {\em closed} if $A^{++}=A$. Let $\Cl(\mathbf L)$ denote the set of all closed subsets of $L$. Then clearly $\Cl(\mathbf L)=\{A^+\mid A\subseteq L\}$. Because of $A^+\cap A^{++}=\emptyset$ for all $A\subseteq L$ we have that $\big(\Cl(\mathbf L),\subseteq,{}^+,\emptyset,L\big)$ forms a complete ortholattice with
\begin{align*}
	\bigvee_{i\in I}A_i & =\left(\bigcup_{i\in I}A_i\right)^{++}, \\
	\bigwedge_{i\in I}A_i & =\bigcap_{i\in I}A_i
\end{align*}
for all families $(A_i;i\in I)$ of closed subsets of $L$.

Next we describe the basic properties of the operator $^+$.

\begin{proposition}\label{prop1}
	Let $\mathbf L=(L,\vee,\wedge,0,1)$ be a complemented lattice and $a\in L$. Then the following holds:
	\begin{enumerate}[{\rm(i)}]
		\item $a\in a^{++}$ and $a^{+++}=a^+$,
		\item $(x^+,\le)$ is an antichain for every $x\in L$ if and only if $\mathbf L$ does not contain a sublattice isomorphic to $\mathbf N_5$ containing $0$ and $1$,
		\item $(a^+,\le)$ is convex,
		\item if the mapping $x\mapsto x^{++}$ from $L$ to $2^L$ is not injective then $\mathbf L$ does not satisfy the identity $x^{++}\approx x$.
	\end{enumerate}
\end{proposition}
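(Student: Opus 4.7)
For (i), the plan is simply to invoke the four Galois--connection properties already displayed in the text for the singleton $A=\{a\}$: the first line gives $a\in a^{++}$, and the third line gives $a^{+++}=a^+$.

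For (ii), I would argue by contraposition in both directions. For the ``if'' direction: suppose $(x^+,\le)$ fails to be an antichain, so there exist $y,z\in x^+$ with $y<z$. I claim $\{0,x,y,z,1\}$ is a sublattice isomorphic to $\mathbf N_5$ with the given bottom and top. One first rules out $x\in\{0,1\}$ (in either case $x^+$ is a singleton, hence an antichain, contradicting the hypothesis) and then shows $x$ is incomparable with both $y$ and $z$: e.g.\ $x\le y$ together with $x\wedge y=0$ would force $x=0$, while $y\le x$ together with $x\vee y=1$ would force $x=1$; similarly for $z$. The remaining joins and meets among these five elements then stay inside the set (using $x\vee y=x\vee z=1$, $x\wedge y=x\wedge z=0$, $y\vee z=z$, $y\wedge z=y$), so it is a sublattice, and the Hasse diagram is exactly that of $\mathbf N_5$. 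Conversely, if $\mathbf L$ contains $\mathbf N_5=\{0,y,z,x,1\}$ (with $y<z$ on the long side and $x$ on the short side) as a sublattice sharing the bounds, then $x\vee y=x\vee z=1$ and $x\wedge y=x\wedge z=0$, so $y,z\in x^+$ are comparable, and $x^+$ is not an antichain.

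For (iii), I would take $b,d\in a^+$ and $c\in L$ with $b\le c\le d$ and verify $c\in a^+$ directly: monotonicity gives $a\vee c\ge a\vee b=1$ and $a\wedge c\le a\wedge d=0$.

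For (iv), the cleanest approach is the contrapositive. If $\mathbf L$ satisfies $x^{++}\approx x$, then, under the identification of a singleton with its element, the map $x\mapsto x^{++}$ is the map $x\mapsto\{x\}$, which is plainly injective on $L$. Hence non-injectivity of $x\mapsto x^{++}$ excludes the identity.

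The only genuinely delicate step is the ``if'' direction of (ii): namely the verification that the five elements really form a sublattice of $\mathbf L$ (not merely a subposet) and that $x$ is incomparable with both $y$ and $z$, which requires the small case analysis using $x\ne 0,1$ indicated above.
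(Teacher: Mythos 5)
Your proposal is correct and follows essentially the same route as the paper: (i) from the Galois-connection facts, (ii) by the same two-way contrapositive construction of an $\mathbf N_5$ on $\{0,x,y,z,1\}$ (you merely spell out the incomparability and closure checks the paper leaves implicit), (iii) by the same direct monotonicity computation, and (iv) is just the contrapositive formulation of the paper's direct argument.
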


\begin{proof}
	\
	\begin{enumerate}[(i)]
		\item follows directly from above.
		\item First assume there exists some $b\in L$ such that $(b^+,\le)$ is not an antichain. Then $b\notin\{0,1\}$. Now there exist $c,d\in b^+$ with $c<d$. Since $b\notin\{0,1\}$ and $b\in c^+\cap d^+$ we have $c,d\notin\{0,1\}$. Because of $|L|>1$ we have $b\notin\{c,d\}$. Hence the elements $0$, $b$, $c$, $d$ and $1$ are pairwise distinct and form an $\mathbf N_5$ containing $0$ and $1$. If, conversely, $\mathbf L$ contains a sublattice $(L_1,\vee,\wedge)$ isomorphic to $\mathbf N_5$ and containing $0$ and $1$, say $L_1=\{0,e,f,g,1\}$ with $e<f$ then $e,f\in g^+$ and hence $(g^+,\le)$ is not an antichain.
		\item If $b,c\in a^+$, $d\in L$ and $b\le d\le c$ then $1=a\vee b\le a\vee d$ and $a\wedge d\le a\wedge c=0$ showing $d\in a^+$.
		\item If the mapping $x\mapsto x^{++}$ is not injective then there exist $a,b\in L$ with $a\ne b$ and $a^{++}=b^{++}$ which implies $b\in b^{++}=a^{++}$ and $b\ne a$ and hence $a^{++}\ne a$ showing that $\mathbf L$ does not satisfy the identity $x^{++}\approx x$.
	\end{enumerate}
\end{proof}

In the lattice $\mathbf N_5$ from Example~\ref{ex1} the mapping $x\mapsto x^{++}$ is not injective since $a\ne c$ and $a^{++}=c^{++}$. According to Proposition~\ref{prop1} (iv), this lattice does not satisfy the identity $x^{++}\approx x$, e.g.\ $a^{++}=\{a,c\}\ne a$.

\begin{corollary}\label{cor1}
	Let $(L,\vee,\wedge,0,1)$ be a complemented modular lattice, $a\in L$ and $A$ a non-empty subset of $L$. According to Proposition~\ref{prop1} {\rm(iii)}, $(a^+,\le)$ is an antichain. Let $b\in A$. Then $A^+\subseteq b^+$ and hence also $(A^+,\le)$ is an antichain. Since $a^+$ is a non-empty subset of $L$ we finally conclude that $(a^{++},\le)$ is an antichain, too.
\end{corollary}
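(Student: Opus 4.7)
The plan is to combine Proposition~\ref{prop1}(ii) with the classical Dedekind criterion for modularity, which asserts that a lattice is modular if and only if it contains no sublattice isomorphic to $\mathbf N_5$. Since $\mathbf L$ is modular, no such $\mathbf N_5$-sublattice exists, and in particular none containing $0$ and $1$; Proposition~\ref{prop1}(ii) therefore gives directly that $(x^+,\le)$ is an antichain for every $x\in L$. Applied to $x:=a$ this is the first assertion. (I note in passing that the statement cites item~(iii), but (iii) records convexity of $(a^+,\le)$; the antichain conclusion is really supplied by~(ii), so this looks like a bookkeeping slip.)

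Next I would handle a non-empty $A\subseteq L$ by unwinding the definition of $A^+$. Fix any $b\in A$. Every $x\in A^+$ is by definition a complement of each element of $A$, and in particular a complement of $b$, so $A^+\subseteq b^+$. Since $(b^+,\le)$ is an antichain by the previous step, the same holds for its subset $(A^+,\le)$.

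Finally, because $\mathbf L$ is complemented the set $a^+$ is non-empty, so taking $A:=a^+$ in the previous step yields that $(a^{++},\le)=\bigl((a^+)^+,\le\bigr)$ is an antichain. The only point worth flagging as a possible obstacle is the appeal to the Dedekind no-$\mathbf N_5$ characterization of modularity, but this is entirely standard and requires no work here; every other step is a direct inclusion or a one-line invocation of Proposition~\ref{prop1}(ii).
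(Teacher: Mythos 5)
Your proof is correct and follows essentially the same route as the paper: the corollary's own text just cites Proposition~\ref{prop1} for the antichain property of $a^+$ (the reference to (iii) there is indeed a slip for (ii), exactly as you note, since modularity excludes any $\mathbf N_5$ and hence any such sublattice through $0$ and $1$), then uses $A^+\subseteq b^+$ for $b\in A$ and applies this with $A=a^+\ne\emptyset$ to get that $(a^{++},\le)$ is an antichain. Nothing further is needed.
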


In case of finite $L$ we can even prove the following.

\begin{proposition}
	Let $(L,\vee,\wedge,0,1)$ be a finite complemented lattice such that $x\mapsto x^{++}$ is injective and $a\in L$ and assume $a^{++}\ne a$. Then there exists some $b\in a^{++}$ with $b^{++}=b$.
\end{proposition}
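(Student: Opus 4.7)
The plan is to construct a strictly descending chain of closed sets $a^{++}=a_0^{++}\supsetneq a_1^{++}\supsetneq\cdots$ inside $a^{++}$ and use finiteness of $L$ to force termination at a fixed point of $x\mapsto x^{++}$. As a preliminary fact I would first check that $y\in x^{++}$ implies $y^{++}\subseteq x^{++}$: from $\{y\}\subseteq x^{++}$, the Galois-connection identities recorded in the excerpt ($A\subseteq B\Rightarrow B^+\subseteq A^+$ and $A^{+++}=A^+$) give $x^+=x^{+++}\subseteq y^+$, and applying ${}^+$ once more yields $y^{++}\subseteq x^{++}$.

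With this in hand, I set $a_0:=a$ and, as long as $a_i^{++}\ne\{a_i\}$, choose $a_{i+1}\in a_i^{++}\setminus\{a_i\}$. Such a choice exists at $i=0$ precisely because of the hypothesis $a^{++}\ne a$, and at every later stage by the same condition $a_i^{++}\ne\{a_i\}$ together with $a_i\in a_i^{++}$ (Proposition~\ref{prop1}(i)). The preliminary fact then gives $a_{i+1}^{++}\subseteq a_i^{++}$ inductively, so every $a_i$ lies in $a^{++}$.

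The crux is upgrading these inclusions to strict ones: since $a_{i+1}\ne a_i$, the injectivity hypothesis forces $a_{i+1}^{++}\ne a_i^{++}$, and combined with the containment above this yields $a_{i+1}^{++}\subsetneq a_i^{++}$. Because $L$ is finite, this strictly descending chain of subsets of $L$ must terminate; at the last index $n$ one cannot find a new element in $a_n^{++}$ distinct from $a_n$, so $a_n^{++}=\{a_n\}$, i.e.\ $a_n^{++}=a_n$ under the singleton identification. Setting $b:=a_n\in a^{++}$ produces the required element. The only genuinely decisive step is the interaction of the preliminary lemma with injectivity to secure strict inclusion at each stage; the rest is bookkeeping guaranteed by the Galois connection and the finiteness of $L$.
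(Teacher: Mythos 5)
Your proposal is correct and follows essentially the same route as the paper: the same iterative choice of $a_{i+1}\in a_i^{++}\setminus\{a_i\}$, the same use of injectivity to make the inclusions $a_{i+1}^{++}\subseteq a_i^{++}$ strict, and the same finiteness argument to terminate at some $a_n$ with $a_n^{++}=\{a_n\}$. Your explicit verification of the monotonicity fact $y\in x^{++}\Rightarrow y^{++}\subseteq x^{++}$ via the Galois connection is merely a detail the paper leaves implicit.
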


\begin{proof}
	Let $a_1\in a^{++}\setminus\{a\}$. Then $a_1^{++}\subseteq a^{++}$. Since $a_1\ne a$ and $x\mapsto x^{++}$ is injective we conclude $a_1^{++}\subsetneqq a^{++}$. Now either $a_1^{++}=a_1$ or there exists some $a_2\in a_1^{++}\setminus\{a_1\}$. Then $a_2^{++}\subseteq a_1^{++}$. Since $a_2\ne a_1$ and $x\mapsto x^{++}$ is injective we conclude $a_2^{++}\subsetneqq a_1^{++}$. Now either $a_2^{++}=a_2$ or there exists some $a_3\in a_2^{++}\setminus\{a_2\}$. Since $L$ is finite and $a_1^{++}\supsetneqq a_2^{++}\supsetneqq\cdots$ there exists some $n\ge1$ with $|a_n^{++}|=1$, i.e.\ $a_n^{++}=a_n$ and we have $a_n\in a_n^{++}\subseteq a_{n-1}^{++}\subseteq\cdots\subseteq a_1^{++}\subseteq a^{++}$.
\end{proof}

The relationship between the operator $^+$ and the partial order relation of $\mathbf L$ is illuminated in the following result.

\begin{proposition}
	Let $(L,\vee,\wedge,0,1)$ be a complemented lattice and consider the following statements:
	\begin{enumerate}[{\rm(i)}]
		\item $x^+\vee y^+\le_1(x\wedge y)^+$ for all $x,y\in L$,
		\item for all $x,y\in L$, $x\le y$ implies $y^+\le_1x^+$,
		\item $(x\vee y)^+\le_1x^+\wedge y^+$ for all $x,y\in L$.
	\end{enumerate}
	Then {\rm(i)} $\Rightarrow$ {\rm(ii)} $\Leftrightarrow$ {\rm(iii)}.
\end{proposition}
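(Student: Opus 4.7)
The plan is to verify the three implications (ii)$\Rightarrow$(iii), (iii)$\Rightarrow$(ii), and (i)$\Rightarrow$(ii) by direct unwinding of the definitions of $\le_1$, $A\vee B$, and $A\wedge B$ given in the introduction. The only lattice-theoretic facts I expect to need are the trivial inequalities $x,y\le x\vee y$ and $x\wedge y\le x,y$, together with the fact that $\mathbf L$ is complemented (so every $z^+$ is non-empty).

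For (ii)$\Rightarrow$(iii) I would pick an arbitrary $z\in(x\vee y)^+$ and exhibit an element of $x^+\wedge y^+$ above it. Since $x\le x\vee y$ and $y\le x\vee y$, applying (ii) to each inequality yields $u\in x^+$ with $z\le u$ and $v\in y^+$ with $z\le v$. Then $u\wedge v$ belongs to $x^+\wedge y^+$ by definition and satisfies $z\le u\wedge v$, which is exactly what $\le_1$ requires. For the converse (iii)$\Rightarrow$(ii), assume $x\le y$, so that $x\vee y=y$. Then (iii) reads $y^+\le_1 x^+\wedge y^+$; given $z\in y^+$, there exist $u\in x^+$ and $v\in y^+$ with $z\le u\wedge v$, and in particular $z\le u$, proving $y^+\le_1 x^+$.

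For (i)$\Rightarrow$(ii), again let $x\le y$, so $x\wedge y=x$, and (i) becomes $x^+\vee y^+\le_1 x^+$. The subtle point is that we want to conclude something about an arbitrary $v\in y^+$, whereas (i) only talks about joins $u\vee v$. This is where I would use that $\mathbf L$ is complemented: $x^+$ is non-empty, so pick any $u\in x^+$; then $u\vee v\in x^+\vee y^+$, and (i) provides $w\in x^+$ with $u\vee v\le w$. Since $v\le u\vee v\le w$, this gives (ii).

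I do not expect any genuine obstacle; the only place where care is needed is the last step, where one must remember that $x^+\ne\emptyset$ in order to manufacture an element of $x^+\vee y^+$ that dominates a prescribed $v\in y^+$. The rest is bookkeeping with the element-wise quantifiers hidden inside $\le_1$, $A\vee B$, and $A\wedge B$.
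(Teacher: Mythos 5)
Your proof is correct and follows essentially the same route as the paper: establish (i)$\Rightarrow$(ii) via the chain $y^+\le_1 x^+\vee y^+\le_1(x\wedge y)^+=x^+$ (using $x^+\ne\emptyset$), and the equivalence (ii)$\Leftrightarrow$(iii) by specializing $x\vee y$ under $x\le y$ and by combining $(x\vee y)^+\le_1 x^+,y^+$ into $\le_1 x^+\wedge y^+$. You merely spell out elementwise what the paper writes as chains of $\le_1$-inequalities, including the nonemptiness point the paper leaves implicit.
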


\begin{proof}
	Let $a,b\in L$. \\
	(i) $\Rightarrow$ (ii): \\
	$a\le b$ implies $b^+=(a\vee b)^+\le_1a^+\wedge b^+\le_1a^+$. \\
	(ii) $\Rightarrow$ (iii): \\
	Because of $a,b\le a\vee b$ we have $(a\vee b)^+\le_1a^+,b^+$ which implies $(a\vee b)^+\le_1a^+\wedge b^+$. \\
	(iii) $\Rightarrow$ (ii): \\
	$a\le b$ implies $b^+\le_1a^+\vee b^+\le(a\wedge b)^+=a^+$.
\end{proof}

Our next task is to characterize the property that a complemented lattice $\mathbf L$ satisfies the identity $x^{++}\approx x$. From Example~\ref{ex1} we know that if $\mathbf L$ is not modular then this identity need not hold. Hence we restrict ourselves to complemented modular lattices.

\begin{theorem}
	Let $\mathbf L=(L,\vee,\wedge,0,1)$ be a complemented modular lattice. Then the following are equivalent:
	\begin{enumerate}[{\rm(i)}]
		\item $\mathbf L$ satisfies the identity $x^{++}\approx x$,
		\item for every $x\in L$ and each $y\in x^{++}$ there exists some $z\in y^+$ satisfying either $(x\vee y)\wedge z=0$ or $(x\wedge y)\vee z=1$.
	\end{enumerate}
\end{theorem}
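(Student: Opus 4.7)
The plan is to prove both directions. The (i) $\Rightarrow$ (ii) direction is immediate: if $x^{++}\approx x$ holds, then $y\in x^{++}$ forces $y=x$, so picking any $z\in x^+=y^+$ (non-empty because $\mathbf L$ is complemented) yields $(x\vee y)\wedge z=x\wedge z=0$, fulfilling (ii).

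For (ii) $\Rightarrow$ (i), I would fix $x\in L$ and an arbitrary $y\in x^{++}$, aiming to show $y=x$. By (ii), there is $z\in y^+$ satisfying one of the two alternatives, and the two cases are dual, so I focus on the first. Assume $(x\vee y)\wedge z=0$. The key step is to apply modularity to the pair $y\le x\vee y$: the identity $y\vee\bigl(z\wedge(x\vee y)\bigr)=(y\vee z)\wedge(x\vee y)$ simplifies, using $y\vee z=1$ from $z\in y^+$ together with the hypothesis, to $y=x\vee y$, giving $x\le y$.

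To finish, I exploit that $y\in x^{++}$ is equivalent (by the Galois connection recorded just before Proposition~\ref{prop1}) to $x^+\subseteq y^+$. Since $\mathbf L$ is complemented, pick any $w\in x^+\subseteq y^+$; then $w$ is a common complement of $x$ and $y$, and modularity applied to $x\le y$ yields $y=y\wedge(x\vee w)=x\vee(y\wedge w)=x$. The dual case $(x\wedge y)\vee z=1$ runs symmetrically: the modular identity applied to $x\wedge y\le y$ gives $y\le x$, and then $x=x\wedge(y\vee w)=y\vee(x\wedge w)=y$. The only slightly delicate point is spotting which comparable pair to feed to the modular law in each case; once that is chosen, the rest reduces to a direct computation using the defining properties of $z$ and the common complement $w$, with no further hypothesis on $\mathbf L$ beyond modularity needed.
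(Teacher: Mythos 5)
Your proof is correct, and in the direction (ii) $\Rightarrow$ (i) it takes a genuinely different route to the finish than the paper. Both arguments use the same modular computations on the two alternatives: from $(x\vee y)\wedge z=0$ with $z\in y^+$ one gets $y=y\vee\bigl(z\wedge(x\vee y)\bigr)=(y\vee z)\wedge(x\vee y)=x\vee y$, hence $x\le y$, and dually $y\le x$ in the other case. The paper, however, argues by contradiction: it takes $b\in a^{++}\setminus\{a\}$, invokes its Corollary on complemented modular lattices (that $(a^{++},\le)$ is an antichain, which rests on the $\mathbf N_5$-characterization in Proposition~2.3) to conclude $a\parallel b$, and then observes that either alternative forces comparability, a contradiction. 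You instead finish directly: having obtained comparability of $x$ and $y$, you note that $y\in x^{++}$ is exactly the inclusion $x^+\subseteq y^+$, pick a common complement $w\in x^+\subseteq y^+$ (non-empty by complementedness), and use the classical modular-lattice fact that comparable elements with a common complement coincide, e.g.\ $y=y\wedge(x\vee w)=x\vee(y\wedge w)=x$ when $x\le y$. Your version is self-contained (no appeal to the antichain corollary or to $\mathbf N_5$-freeness) and is a direct proof that every $y\in x^{++}$ equals $x$, which together with $x\in x^{++}$ gives $x^{++}=\{x\}$; the paper's version is shorter at the final step because it reuses structural results already established earlier in the text. The (i) $\Rightarrow$ (ii) direction is the same in both.
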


\begin{proof}
	$\text{}$ \\
	(i) $\Rightarrow$ (ii): \\
	If $a\in L$, $b\in a^{++}$ and $c\in b^+$ then $b=a$ and $(a\vee b)\wedge c=b\wedge c=0$. \\
	(ii) $\Rightarrow$ (i): \\
	Assume (ii). Suppose $\mathbf L$ not to satisfy the identity $x^{++}\approx x$. Then there exists some $a\in L$ with $a^{++}\ne a$. Let $b\in a^{++}\setminus\{a\}$. According to (ii) there exists some $c\in b^+$ satisfying either $(a\vee b)\wedge c=0$ or $(a\wedge b)\vee c=1$. Since $a$ and $b$ are different elements of $a^{++}$ and $(a^{++},\le)$ is an antichain according to Corollary~\ref{cor1}, we conclude $a\parallel b$. Now $(a\vee b)\wedge c=0$ would imply
	\[
	a\le a\vee b=1\wedge(a\vee b)=(b\vee c)\wedge(a\vee b)=b\vee\big(c\wedge(a\vee b)\big)=b\vee0=b,
	\]
	contradicting $a\parallel b$. On the other hand, $(a\wedge b)\vee c=1$ would imply
	\[
	b=1\wedge b=\big((a\wedge b)\vee c\big)\wedge b=(a\wedge b)\vee(c\wedge b)=(a\wedge b)\vee0=a\wedge b\le a,
	\]
	again contradicting $a\parallel b$. This shows that $\mathbf L$ satisfies the identity $x^{++}\approx x$.
\end{proof}

\section{The operator $\to$}

Let $\mathbf L=(L,\vee,\wedge,{}',0,1)$ be an orthomodular lattice. Recall that the operation $\phi_x$ defined by $\phi_x(y):=x\wedge(x'\vee y)$ for all $x,y\in L$ was introduced by U.~Sasaki in \cite{Sa50} and \cite{Sa52} and is called the {\em Sasaki projection} (see e.g.\ \cite{Be}) or {\em Sasaki hook} alias {\em Sasaki operation}, see \cite{CL}; its dual, i.e.\ the operation $\psi_x$ defined by $\psi_x(y):=x'\vee(x\wedge y)$ for all $x,y\in L$ is then called the {\em dual Sasaki projection}. It was shown by the authors in \cite{CL} that if we use these Sasaki operations in order to define
\begin{align*}
  x\to y & :=x'\vee(x\wedge y), \\
x\cdot y & :=(x\vee y')\wedge y
\end{align*}
for all $x,y$ belonging to the base set of the orthomodular lattice $\mathbf L$ then the operations $\to$ and $\cdot$ form an adjoint pair, i.e.\
\[
x\cdot y\le z\text{ if and only if }x\le y\to z
\]
for all $x,y,z\in L$. This motivated us to introduce our next operators in a similar way where, however, instead of the element $x'$ we use the set $x^+$. Hence, for a complemented lattice $(L,\vee,\wedge,0,1)$, $a,b\in L$ and $A,B\subseteq L$ we define
\begin{align*}
a\to b & :=a^+\vee(a\wedge b), \\
A\to B & :=A^+\vee(A\wedge B).
\end{align*}
Observe that $A\to B=\emptyset$ whenever $A^+=\emptyset$.

\begin{example}
	For the lattice from Figure~2 we have e.g.
	\begin{align*}
		a\to b & =\{h,i,j\}\vee(a\wedge b)=\{h,i,j\}\vee0=\{h,i,j\}=a^+, \\
		a\to f & =\{h,i,j\}\vee(a\wedge f)=\{h,i,j\}\vee a=1, \\
		a\to g & =\{h,i,j\}\vee(a\wedge g)=\{h,i,j\}\vee a=1, \\
		a\to h & =\{h,i,j\}\vee(a\wedge h)=\{h,i,j\}\vee0=\{h,i,j\}=a^+, \\
		f\to e & =e\vee(f\wedge e)=e\vee0=e, \\
		g\to h & =\{b,c,d\}\vee(g\wedge h)=\{b,c,d\}\vee e=\{h,i,j\}=a^+.
	\end{align*}
\end{example}

In the following we study the relationship between $\to$ and $\wedge$.

\begin{theorem}
	Let $(L,\vee,\wedge,0,1)$ be a complemented modular lattice and $a,b,c\in L$. Then the following holds:
	\begin{enumerate}[{\rm(i)}]
		\item If $a\le_1b\to c$ then $a\wedge b\le c$,
		\item $a\wedge b\le c$ if and only if $a\wedge b\le_1b\to c$.
	\end{enumerate}
\end{theorem}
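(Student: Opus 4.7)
The plan is to exploit the modular law to evaluate the meet $(z\vee(b\wedge c))\wedge b$ whenever $z\in b^+$. Concretely, since $b\wedge c\le b$, the modular law gives
\[
\bigl(z\vee(b\wedge c)\bigr)\wedge b=(z\wedge b)\vee(b\wedge c)=0\vee(b\wedge c)=b\wedge c,
\]
because $z\in b^+$ forces $z\wedge b=0$. This identity is the engine of the whole argument.

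For (i), I take an element $y\in b\to c$ with $a\le y$; by the definition of $b\to c$ there exists $z\in b^+$ such that $y=z\vee(b\wedge c)$. Meeting the inequality $a\le y$ with $b$ and applying the computation above yields $a\wedge b\le y\wedge b=b\wedge c\le c$, which is exactly (i). Note that this direction uses modularity essentially but does not need the full complement structure beyond the single $z\in b^+$ that produced $y$.

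For the forward direction of (ii), I assume $a\wedge b\le c$, which gives $a\wedge b\le b\wedge c$. Since $\mathbf L$ is complemented, I can pick any $z\in b^+$; then $z\vee(b\wedge c)\in b\to c$, and trivially $a\wedge b\le b\wedge c\le z\vee(b\wedge c)$, witnessing $a\wedge b\le_1b\to c$. The reverse direction of (ii) is then immediate from (i) applied with $a$ replaced by $a\wedge b$, using $(a\wedge b)\wedge b=a\wedge b$.

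There is no real obstacle here; the only point to be careful about is to invoke modularity in the correct form ($b\wedge c\le b$, so $(z\vee(b\wedge c))\wedge b=(z\wedge b)\vee(b\wedge c)$) and to remember that ``$a\le_1 B$'' for a single element $a$ means ``there exists $y\in B$ with $a\le y$.'' The complementedness of $\mathbf L$ is used only to guarantee that $b^+\ne\emptyset$ in the forward direction of (ii), so that $b\to c$ is nonempty and a witness $z$ can actually be chosen.
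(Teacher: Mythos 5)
Your proof is correct and follows essentially the same route as the paper: in (i) and the reverse direction of (ii) you use the modular law with $b\wedge c\le b$ and $z\wedge b=0$ for the witness $z\in b^+$, and in the forward direction of (ii) you bound $a\wedge b$ by $z\vee(b\wedge c)\in b\to c$ for an arbitrary complement $z$ of $b$. The only cosmetic difference is that you deduce the reverse direction of (ii) by applying (i) to $a\wedge b$ instead of repeating the computation, which is a harmless streamlining.
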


\begin{proof}
	\
	\begin{enumerate}[(i)]
		\item
	From $a\le_1b\to c$ we conclude that there exists some $d\in b^+$ satisfying $a\le d\vee(b\wedge c)$, and we obtain
\[
a\wedge b\le\big(d\vee(b\wedge c)\big)\wedge b=\big((b\wedge c)\vee d\big)\wedge b=(b\wedge c)\vee(d\wedge b)=(b\wedge c)\vee0=b\wedge c\le c.
\]
		\item
	First assume $a\wedge b\le c$. Let $e\in b^+$. Then
	\[
	a\wedge b\le e\vee(a\wedge b)=e\vee\big(b\wedge(a\wedge b)\big)\le e\vee(b\wedge c).
	\]
	This shows $a\wedge b\le_1b\to c$. Conversely, assume $a\wedge b\le_1b\to c$. Then there exists some $f\in b^+$ with $a\wedge b\le f\vee(b\wedge c)$. So we get
	\[
	a\wedge b=(a\wedge b)\wedge b\le\big(f\vee(b\wedge c)\big)\wedge b=\big((b\wedge c)\vee f\big)\wedge b=(b\wedge c)\vee(f\wedge b)=(b\wedge c)\vee0=b\wedge c\le c.
	\]
	\end{enumerate}
\end{proof}

For complemented lattices, the operator $\to$ satisfies a lot of properties common in residuated structures.

\begin{theorem}\label{th1}
Let $(L,\vee,\wedge,0,1)$ be a complemented lattice and $a,b,c\in L$. Then the following holds:
\begin{enumerate}[{\rm(i)}]
	\item $a\to0=a^+$ and $1\to a=a$,
	\item If $a\le b$ then $a\to b=1$,
	\item $a\to b=1$ if and only if $a\wedge b\in a^{++}$,
	\item if $b\in a^+$ then $a\to b=a^+$,
	\item if $b\le c$ then $a\to b\le_ia\to c$ for $i=1,2$,
	\item if $a\to b=a\to c=1$ and $a^{++}$ is closed with respect to $\wedge$ then $a\to(b\wedge c)=1$,
	\item if $a^{++}\subseteq b^{++}$ and $a\to b=1$ then $b\to a=1$.
\end{enumerate}
\end{theorem}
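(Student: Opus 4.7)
The plan is to unfold the definition $a\to b = a^+\vee(a\wedge b)=\{x\vee(a\wedge b)\mid x\in a^+\}$ and argue each item by a direct computation, keeping $a^+$ explicit. Parts (i), (ii), and (iv) are essentially immediate substitutions: for (i) use $a\wedge 0=0$, $1^+=\{0\}$, and $1\wedge a=a$; for (ii) use $a\le b\Rightarrow a\wedge b=a$, so each $x\in a^+$ contributes $x\vee a=1$ and the whole set collapses to $\{1\}$; for (iv) use $b\in a^+\Rightarrow a\wedge b=0$.

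Part (iii) is the hinge for the rest of the theorem, so I would prove it with care. Unfolding $a\to b=1$ gives precisely the condition that $x\vee(a\wedge b)=1$ for every $x\in a^+$. This is half of what $a\wedge b\in a^{++}$ demands; the other half, $x\wedge(a\wedge b)=0$, is automatic via $x\wedge(a\wedge b)=(x\wedge a)\wedge b=0\wedge b=0$ because $x\in a^+$. So the join half is the only real content, and both directions of the equivalence read off the definitions at once.

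With (iii) in hand, parts (vi) and (vii) become pure bookkeeping. For (vi), applying (iii) to the hypotheses yields $a\wedge b,a\wedge c\in a^{++}$, whence $(a\wedge b)\wedge(a\wedge c)=a\wedge(b\wedge c)\in a^{++}$ by the closure assumption, and (iii) run backwards gives $a\to(b\wedge c)=1$. For (vii), (iii) gives $a\wedge b\in a^{++}\subseteq b^{++}$, and since $a\wedge b=b\wedge a$ one more application of (iii) yields $b\to a=1$.

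Only (v) requires tracking the set structure rather than collapsing to $\{1\}$. I would use a single parametrization by $x\in a^+$: the element $x\vee(a\wedge b)\in a\to b$ is bounded above by $x\vee(a\wedge c)\in a\to c$, which witnesses $\le_1$; reading the same inequality in the other direction witnesses $\le_2$, because $x\mapsto x\vee(a\wedge c)$ also ranges over all of $a\to c$ as $x$ ranges over $a^+$. I do not foresee a genuine obstacle: the only subtlety is recognizing in (iii) that the meet half of complementhood comes for free, and that no modularity is needed for any item in this theorem.
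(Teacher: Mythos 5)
Your proposal is correct and follows essentially the same route as the paper: unfold $a\to b=a^+\vee(a\wedge b)$ elementwise, observe in (iii) that the meet condition $x\wedge(a\wedge b)=0$ is automatic for $x\in a^+$, derive (vi) and (vii) from (iii), and handle (v) by the parametrization $x\mapsto x\vee(a\wedge\cdot)$ for both $\le_1$ and $\le_2$. Your remark that no modularity is needed is also consistent with the paper's hypotheses.
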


\begin{proof}
	\
	\begin{enumerate}[(i)]
		\item We have $a\to0=a^+\vee(a\wedge0)=a^+\vee0=a^+$ and $1\to a=1^+\vee(1\wedge a)=0\vee a=a$.
		\item If $a\le b$ then $a\to b=a^+\vee(a\wedge b)=a^+\vee a=1$.
		$d=e\vee(a\wedge b)$. Now we have $d=e\vee(a\wedge b)\le e\vee(a\wedge c)\in a\to c$.
		\item The following are equivalent:
		\begin{align*}
			a\to b & =1, \\
			a^+\vee(a\wedge b) & =1, \\
			a^+\vee(a\wedge b) & =1\text{ and }a^+\wedge(a\wedge b)=0, \\
			a\wedge b & \in a^{++}.
		\end{align*}
		\item If $b\in a^+$ then $a\to b=a^+\vee(a\wedge b)=a^+\vee0=a^+$.
		\item If $b\le c$ then $a\to b=a^+\vee(a\wedge b)\le_ia^+\vee(a\wedge c)=a\to c$ for $i=1,2$.
		\item Using (v) and the assumptions we obtain $a\wedge b,a\wedge c\in a^{++}$ and hence $a\wedge(b\wedge c)=(a\wedge b)\wedge(a\wedge c)\in a^{++}$ showing $a\to(b\wedge c)=1$.
		\item Using (v) and the assumptions we have $a\to b=1$ and hence $b\wedge a=a\wedge b\in a^{++}\subseteq b^{++}$ which implies $b\to a=1$.
	\end{enumerate}
\end{proof}

Let us note that the converse of Theorem~\ref{th1} (ii) does not hold in general. For example, consider the lattice $\mathbf N_5$ from Example~\ref{ex1}. Then $c\to a=c^+\vee(c\wedge a)=b\vee a=1$ contrary to the fact that $c>a$. However, if $x$ is a minimal element of $x^{++}$ then we can prove the following.

\begin{proposition}
Let $(L,\vee,\wedge,0,1)$ be a complemented lattice and $a\in L$. Then the following are equivalent:
	\begin{enumerate}[{\rm(i)}]
	\item For all $x\in L$, $a\to x=1$ is equivalent to $a\le x$,
	\item $a$ is a minimal element of $a^{++}$.
	\end{enumerate}
\end{proposition}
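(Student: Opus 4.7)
The plan is to exploit Theorem~\ref{th1}(iii), which rewrites $a\to x=1$ as the membership condition $a\wedge x\in a^{++}$, together with Proposition~\ref{prop1}(i), which gives $a\in a^{++}$. Once these two ingredients are in hand, the equivalence essentially reduces to a comparison inside the set $a^{++}$.

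For the direction (ii) $\Rightarrow$ (i), the easy implication ``$a\le x$ implies $a\to x=1$'' is already Theorem~\ref{th1}(ii), so only the reverse needs work. Assume $a\to x=1$. By Theorem~\ref{th1}(iii) we obtain $a\wedge x\in a^{++}$, and trivially $a\wedge x\le a$. Since $a$ is a minimal element of $a^{++}$ by assumption (ii), and $a\wedge x$ is an element of $a^{++}$ sitting below $a$, minimality forces $a\wedge x=a$, i.e.\ $a\le x$, as required.

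For (i) $\Rightarrow$ (ii), I would argue by contraposition in the following form: suppose $b\in a^{++}$ with $b\le a$, and show $b=a$. The key observation is that $a\to b$ collapses nicely in this situation. Since $b\le a$ we have $a\wedge b=b$, so
\[
a\to b=a^+\vee(a\wedge b)=a^+\vee b=\{d\vee b\mid d\in a^+\}.
\]
Because $b\in a^{++}$, every $d\in a^+$ satisfies $d\vee b=1$, so $a\to b=\{1\}$, i.e.\ $a\to b=1$. Hypothesis (i) now yields $a\le b$, which combined with $b\le a$ gives $b=a$. Hence no element of $a^{++}$ can lie strictly below $a$, which is exactly the minimality stated in (ii).

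No real obstacle is anticipated: the whole argument is a direct application of the previously established characterization in Theorem~\ref{th1}(iii). The only small point to keep in mind is the convention that ``$a\to b=1$'' refers to the set $a\to b$ being the singleton $\{1\}$, so that it genuinely encodes $d\vee(a\wedge b)=1$ for every $d\in a^+$, i.e.\ $a\wedge b\in a^{++}$.
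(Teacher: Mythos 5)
Your proof is correct and follows essentially the same route as the paper: both rest on the characterization $a\to x=1\Leftrightarrow a\wedge x\in a^{++}$ from Theorem~\ref{th1} together with $a\in a^{++}$, reducing everything to whether elements of $a^{++}$ below $a$ must equal $a$. The only difference is presentational -- you split the statement into two implications (recomputing $a\to b=\{1\}$ directly in one direction), whereas the paper writes a single chain of equivalences -- so there is nothing substantive to add.
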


\begin{proof}
According to (ii) of Theorem~\ref{th1} the following are equivalent:
\begin{align*}
	& \text{For all }x\in L, a\to x=1\text{ is equivalent to }a\le x, \\
	& \text{for all }x\in L, a\wedge x\in a^{++}\text{ is equivalent to }a\wedge x=a, \\
	& \text{for all }y\le a, y\in a^{++}\text{ is equivalent to }y=a, \\
	& a\text{ is a minimal element of }a^{++}.
\end{align*}
\end{proof}

We are  going to show how the operator $\to$ is related to the connective implication in a propositional calculus.

\begin{theorem}\label{th2}
	Let $\mathbf L=(L,\vee,\wedge,0,1)$ be a complemented modular lattice and $a,b\in L$. Then the following holds:
	\begin{enumerate}[{\rm(i)}]
		\item $a\wedge(a\to b)=a\wedge b\le b$ {\rm(}{\em Modus Ponens}{\rm)},
		\item if $a^+\le b^+$ then $(a\to b)\wedge b^+=a^+$ {\rm(}{\em Modus Tollens}{\rm)},
		\item if $c\in a\to b$ then $a\to c=a\to b$,
		\item $a\to(a\to b)=a\to b$,
		\item if $a^+\le b$ then $a\to b=b$,
	\end{enumerate}
\end{theorem}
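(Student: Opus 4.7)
The general strategy is to unfold the definition
\[
a \to b \;=\; a^+ \vee (a \wedge b) \;=\; \{d \vee (a \wedge b) \mid d \in a^+\}
\]
and, in each item, to show that the resulting set collapses to a single element by a direct application of the modular law, exploiting the identities $a \wedge d = 0$ and $a \vee d = 1$ that come from $d \in a^+$. Most of the work is really one computation repeated in different dresses.

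For (i), I take an arbitrary $d \in a^+$; since $a \wedge b \le a$, modularity gives
\[
a \wedge \bigl(d \vee (a \wedge b)\bigr) \;=\; (a \wedge d) \vee (a \wedge b) \;=\; 0 \vee (a \wedge b) \;=\; a \wedge b,
\]
so $a \wedge (a \to b) = \{a \wedge b\} = a \wedge b \le b$. Item (iii) is the same computation: if $c = d \vee (a \wedge b)$ with $d \in a^+$, then $a \wedge c = a \wedge b$, hence $a \to c = a^+ \vee (a \wedge b) = a \to b$. Item (iv) is a one-line consequence of (i):
\[
a \to (a \to b) \;=\; a^+ \vee \bigl(a \wedge (a \to b)\bigr) \;=\; a^+ \vee (a \wedge b) \;=\; a \to b.
\]

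For (ii), I fix $d \in a^+$ and $e \in b^+$; the hypothesis $a^+ \le b^+$ forces $d \le e$, and modularity gives
\[
\bigl(d \vee (a \wedge b)\bigr) \wedge e \;=\; d \vee \bigl((a \wedge b) \wedge e\bigr) \;=\; d \vee 0 \;=\; d,
\]
since $(a \wedge b) \wedge e \le b \wedge e = 0$. Thus $(a \to b) \wedge b^+ = a^+$. For (v), let $d \in a^+$; the assumption $a^+ \le b$ gives $d \le b$, and $a \vee d = 1$, so modularity again yields
\[
d \vee (a \wedge b) \;=\; (a \vee d) \wedge b \;=\; 1 \wedge b \;=\; b,
\]
so every element of $a \to b$ equals $b$, i.e.\ $a \to b = b$.

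The only delicacy, rather than a genuine obstacle, is the bookkeeping between set-valued and element-valued expressions under the convention that singletons are identified with their element. Every item rests on the same observation that the expression $d \vee (a \wedge b)$ (or its meet with $a$, or with $e \in b^+$) is independent of the representative $d \in a^+$, the collapse that is precisely what the modular law buys us. Once that independence is in hand, the five ``logical'' identities reduce to one-line calculations.
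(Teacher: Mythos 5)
Your proposal is correct and follows essentially the same route as the paper: unfold $a\to b=a^+\vee(a\wedge b)$ and apply the modular law together with $a\wedge d=0$, $a\vee d=1$ for $d\in a^+$; the only cosmetic difference is that you derive (iv) directly from (i) while the paper routes it through (iii) via a union over $c\in a\to b$, which amounts to the same computation.
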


\begin{proof}
	\
	\begin{enumerate}[(i)]
		\item Using modularity of $\mathbf L$ we compute
\begin{align*}
	a\wedge(a\to b) & =a\wedge\big(a^+\vee(a\wedge b)\big)=\big((a\wedge b)\vee a^+\big)\wedge a=(a\wedge b)\vee(a^+\wedge a)= \\
	& =(a\wedge b)\vee0=a\wedge b\le b.
\end{align*}
\item Under the assumptions
\[
(a\to b)\wedge b^+=\big(a^+\vee(a\wedge b)\big)\wedge b^+=a^+\vee\big((a\wedge b)\wedge b^+\big)=a^+\vee0=a^+.
\]
\item If $c\in a\to b$ then there exists some $d\in a^+$ with $d\vee(a\wedge b)=c$ and hence
\begin{align*}
	a\to c & =a^+\vee\Big(a\wedge\big(d\vee(a\wedge b)\big)\Big)=a^+\vee\Big(\big((a\wedge b)\vee d\big)\wedge a\Big)= \\
	       & =a^+\vee\big((a\wedge b)\vee(d\wedge a)\big)=a^+\vee\big((a\wedge b)\vee0\big)=a^+\vee(a\wedge b)=a\to b.
\end{align*}
\item Using (iii) we obtain
\begin{align*}
a\to(a\to b) & =a^+\vee\big(a\wedge(a\to b)\big)=\bigcup_{c\in a\to b}\big(a^+\vee(a\wedge c)\big)=\bigcup_{c\in a\to b}(a\to c)= \\
             & =\bigcup_{c\in a\to b}(a\to b)=a\to b.
\end{align*}
\item If $a^+\le b$ then $a\to b=a^+\vee(a\wedge b)=(a^+\vee a)\wedge b=1\wedge b=b$.
\end{enumerate}
\end{proof}

\begin{proposition}
	Let $n>1$ and $a,b,c\in M_n$. Then
	\[
		a\wedge b\le c\text{ if and only if }a\le_1b\to c.
	\]
\end{proposition}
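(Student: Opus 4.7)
My plan is to handle the two directions separately, leaning on the previous theorem for one of them. Since $\mathbf M_n$ is a complemented modular lattice, Theorem~\ref{th1} (here I mean the one in the previous section on $\to$ versus $\wedge$) applies, and its part (i) gives the implication from $a\le_1 b\to c$ to $a\wedge b\le c$ immediately. So the real content is the forward direction: assuming $a\wedge b\le c$, exhibit a witness $d\in b^+$ with $a\le d\vee(b\wedge c)$.

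For the forward direction I would do a case analysis on $b$, using the extreme simplicity of $M_n=\{0,a_1,\ldots,a_n,1\}$. The easy cases are $b=0$, where $d=1\in b^+$ satisfies $d\vee(b\wedge c)=1$, and $b=1$, where $b^+=\{0\}$ and $b\to c=c$, so $a\wedge b=a\le c$ gives $a\le c=0\vee(1\wedge c)$. The substantive case is $b=a_j$ an atom, where $b^+=\{a_i:i\ne j\}$. Here I would split further on $a$: if $b\le a$ (so $a\in\{a_j,1\}$), then $a\wedge b=b\le c$ forces $b\wedge c=b$ and any $d\in b^+$ gives $d\vee(b\wedge c)=d\vee a_j=1\ge a$; if $a=0$, any $d$ works trivially; and if $a=a_k$ with $k\ne j$, then choosing $d:=a_k\in b^+$ yields $a\le d\le d\vee(b\wedge c)$.

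The only step requiring care is the last sub-case, where one must notice that the very specific structure of $\mathbf M_n$ allows the choice $d=a_k$: the atoms are their own complements pairwise. In a general complemented modular lattice one would not have such a convenient witness, and indeed the proposition is really a feature of $\mathbf M_n$'s flat structure, where $a\wedge b$ is either $0$ or one of $a,b$ and where every atom distinct from $b$ is both above itself and a complement of $b$. I expect this to be the only place where a reader might pause; the rest is bookkeeping. Once all cases are covered, combined with the quoted part (i) of the previous theorem, the biconditional is established.
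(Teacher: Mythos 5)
Your proof is correct, but it is organized differently from the paper's. The paper first computes a closed form for $x\to y$ in $\mathbf M_n$ (namely $1$ if $x\le y$, $y$ if $x=1$, and $x^+$ if $x\parallel y$ or $y=0$), disposes of the cases $x=0$, $y\le z$, $y=1$, and then verifies \emph{both} directions at once by tabulating the remaining configurations of pairwise distinct atoms and checking that the columns ``$x\wedge y\le z$'' and ``$x\le_1 y\to z$'' agree, including the negative cases. You instead split the equivalence: the direction $a\le_1 b\to c\Rightarrow a\wedge b\le c$ you import from the general theorem of Section~3 on $\to$ versus $\wedge$ in complemented modular lattices (note this is the unnumbered theorem there, not the one the label \texttt{th1} actually points to, though your parenthetical makes clear which result you mean, and it does apply since $\mathbf M_n$ is complemented and modular), and only the forward direction $a\wedge b\le c\Rightarrow a\le_1 b\to c$ is proved by a case analysis on $b$ ($b=0$, $b=1$, $b$ an atom) with an explicit witness $d\in b^+$ in each case; I checked the sub-cases ($a=0$, $b\le a$, $a$ an atom distinct from $b$, using $a_j^+=\{a_i\mid i\ne j\}$) and they are exhaustive and correct. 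What your decomposition buys is that the ``no'' rows of the paper's table never need to be examined, and it isolates exactly which half of the equivalence is a genuine peculiarity of the flat structure of $\mathbf M_n$; what the paper's table buys is a uniform, self-contained verification that does not rely on the earlier general theorem. Either way the proposition is established.
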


\begin{proof}
	It is easy to see that
	\[
	a\to b=\left\{
	\begin{array}{ll}
		1 & a\le b, \\
		b & a=1, \\
		a^+ & a\parallel b\text{ or }b=0
	\end{array}
	\right.
	\]
	If $a=0$ then $a\wedge b=0\wedge b=0\le c$ and $a=0\le_1b\to c$. If $b\le c$ then $a\wedge b\le b\le c$ and $a\le_11=b\to c$. If $b=1$ then both (i) and (ii) are equivalent to $a\le c$. Hence we can assume $a\ne0$, $b\not\le c$ and $b\ne1$. In case $n>2$ let $a$, $b$ and $c$ be pairwise different elements of $M_n\setminus\{0,1\}$ and in case $n=2$ let $M_n=\{0,a,b,1\}$. Then the following cases remain:
	\[
	\begin{array}{r|r|r|r|r}
		x & y & z & x\wedge y\le z & x\le_1y\to z \\
		\hline
		a & a & 0 &      \text{no} &    \text{no} \\
		\hline
		a & a & b &      \text{no} &    \text{no} \\
		\hline
		a & b & 0 &     \text{yes} &   \text{yes} \\
		\hline
		a & b & a &     \text{yes} &   \text{yes} \\ 
		\hline
		a & b & c &     \text{yes} &   \text{yes} \\
		\hline
		1 & a & 0 &      \text{no} &    \text{no} \\
		\hline
		1 & a & b &      \text{no} &    \text{no}
	\end{array}
	\]
\end{proof}

\section{The operator $\odot$}

Similarly as it was done in Section~3 concerning the operator $\to$, also here we define the new operator $\odot$ by means of the generalized Sasaki projection.

For a complemented lattice $(L,\vee,\wedge,0,1)$, $a,b\in L$ and $A,B\subseteq L$ we define
\begin{align*}
a\odot b & :=b\wedge(a\vee b^+), \\
A\odot B & :=B\wedge(A\vee B^+).
\end{align*}
It is evident that $\odot$ need neither be commutative nor associative, but it is idempotent, i.e.\ it satisfies the identity $x\odot x\approx x$ (cf.\ Proposition~\ref{prop2} (iii)).

We list some basic properties of the operator $\odot$.

\begin{proposition}\label{prop2}
	Let $\mathbf L=(L,\vee,\wedge,0,1)$ a complemented lattice and $a,b,c\in L$. Then the following holds:
	\begin{enumerate}[{\rm(i)}]
		\item $0\odot a=a\odot0=0$,
		\item $1\odot a=a\odot1=a$,
		\item $a\wedge b\le a\odot b\le b$ and if $b\le a$ then $a\odot b=b$,
		\item if $a\le b$ then $a\odot c\le_ib\odot c$ for $i=1,2$,
		\item if $\mathbf L$ is modular then $a\le b$ if and only if $a\odot b=a$ and, moreover, $(a\odot b)\odot b=a\odot b$.
		\end{enumerate}
\end{proposition}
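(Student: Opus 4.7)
My plan is to unwind the set-valued definition $a\odot b=\{b\wedge(a\vee y)\mid y\in b^+\}$ in each case and reduce to elementary manipulations in $\mathbf L$, invoking modularity only in part~(v). For (i) and (ii) I would use the observations $0^+=\{1\}$ and $1^+=\{0\}$ together with the defining property $a\wedge y=0$ for every $y\in a^+$; this reduces each of the four identities to a single meet or join.

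For (iii), every element of $a\odot b$ has the form $b\wedge(a\vee y)$ with $y\in b^+$, which is automatically bounded above by $b$ and bounded below by $a\wedge b$ (since $a\wedge b\le b$ and $a\wedge b\le a\le a\vee y$). When $b\le a$, we have $a\vee y\ge b$, hence $b\wedge(a\vee y)=b$ for every $y\in b^+$, so $a\odot b=\{b\}$. For (iv), given $a\le b$ and any $y\in c^+$, monotonicity of $\vee$ and $\wedge$ yields $c\wedge(a\vee y)\le c\wedge(b\vee y)$; since the same index set $c^+$ parametrises both $a\odot c$ and $b\odot c$, this single inequality, read pointwise in $y$, witnesses both $a\odot c\le_1 b\odot c$ and $a\odot c\le_2 b\odot c$.

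Part~(v) is where the modular law does the real work. Assuming $a\le b$, modularity gives $b\wedge(a\vee y)=a\vee(b\wedge y)=a\vee 0=a$ for every $y\in b^+$, so $a\odot b=\{a\}$. Conversely, if $a\odot b=a$ then by (iii) we obtain $a\in a\odot b\le b$, so $a\le b$. Finally, every $z\in a\odot b$ satisfies $z\le b$ by (iii), so applying the modular law once more gives $b\wedge(z\vee y)=z\vee(b\wedge y)=z$ for each $y\in b^+$, whence
\[
(a\odot b)\odot b=\{b\wedge(z\vee y)\mid z\in a\odot b,\ y\in b^+\}=a\odot b.
\]
The only subtlety I foresee is bookkeeping with the singleton-identification convention, so that an equation such as $a\odot b=a$ really asserts $a\odot b=\{a\}$; once that is kept in mind, the arguments are essentially routine, with modularity being the only non-trivial ingredient and entering exclusively in part~(v).
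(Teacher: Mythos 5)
Your proposal is correct and follows essentially the same route as the paper: direct computation from the definition for (i)--(iv) and the modular law (applied with $y\wedge b=0$ for $y\in b^+$, resp.\ $z\le b$ for $z\in a\odot b$) for (v); you merely unwind the paper's setwise shorthand into elementwise computations. No gaps.
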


\begin{proof}
	\
	\begin{enumerate}[(i)]
		\item We have $0\odot a=a\wedge(0\vee a^+)=a\wedge a^+=0$ and $a\odot0=0\wedge(a\vee0^+)=0$.
		\item We have $1\odot a=a\wedge(1\vee a^+)=a\wedge1=a$ and $a\odot1=1\wedge(a\vee1^+)=a\vee0=a$.
		\item This follows from the definition of $a\odot b$.
		\item If $a\le b$ then $a\odot c=c\wedge(a\vee c^+)\le_ic\wedge(b\vee c^+)=b\odot c$ for $i=1,2$.
		\item If $a\le b$ then using modularity of $\mathbf L$ we obtain
		\[
		a\odot b=b\wedge(a\vee b^+)=(a\vee b^+)\wedge b=a\vee(b^+\wedge b)=a\vee0=a.
		\]
		That $a\odot b=a$ implies $a\le b$ follows from (iii). Using (iii) and modularity of $\mathbf L$ we obtain
		\[
		(a\odot b)\odot b=b\wedge\big((a\odot b)\vee b^+\big)=\big((a\odot b)\vee b^+\big)\wedge b=(a\odot b)\vee(b^+\wedge b)=(a\odot b)\vee0=a\odot b.
		\]
	\end{enumerate}
\end{proof}

\begin{example}
	The ``operation tables'' for $\odot$ for the lattices $\mathbf N_5$ and $\mathbf M_3$ {\rm(}see Example~\ref{ex1}{\rm)} are as follows:
	\[
	\begin{array}{c|ccccc}
		\odot & 0 & a & b & c & 1 \\
		\hline
		   0  & 0 & 0 & 0 & 0 & 0 \\
		   a  & 0 & a & 0 & c & a \\
		   b  & 0 & 0 & b & 0 & b \\
		   c  & 0 & a & 0 & c & c \\
		   1  & 0 & a & b & c & 1
	\end{array}
\quad\quad
\begin{array}{r|rrrrr}
	\odot & 0 & a & b & c & 1 \\
	\hline
	   0  & 0 &  0 &  0 &  0 & 0 \\
	   a  & 0 &  a & 0b & 0c & a \\
	   b  & 0 & 0a &  b & 0c & b \\
	   c  & 0 & 0a & 0b &  c & c \\
	   1  & 0 &  a &  b &  c & 1
\end{array}
\]
\hspace*{54mm}$\mathbf N_5$\hspace*{40mm}$\mathbf M_3$
\end{example}

Contrary to the relatively week relationship between $\to$ and $\wedge$, for $\odot$ and $\to$ we can prove here a kind of adjointness.

\begin{theorem}
	Let $(L,\vee,\wedge,0,1)$ be a complemented modular lattice and $a,b,c\in L$. Then
	\[
		a\odot b\le c\text{ if and only if }a\le b\to c.
	\]
\end{theorem}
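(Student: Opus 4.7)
The statement unpacks, via the set-order conventions, to the following concrete claim: writing out the definitions $a\odot b = b\wedge(a\vee b^+) = \{b\wedge(a\vee d)\mid d\in b^+\}$ and $b\to c = b^+\vee(b\wedge c) = \{d\vee(b\wedge c)\mid d\in b^+\}$, what we must prove is
\[
(\forall d\in b^+)\ b\wedge(a\vee d)\le c \quad\Longleftrightarrow\quad (\forall d\in b^+)\ a\le d\vee(b\wedge c).
\]
Since the quantifier ranges over the same set $b^+$ on both sides, my plan is to prove, for each fixed $d\in b^+$, the single-element equivalence $b\wedge(a\vee d)\le c \Leftrightarrow a\le d\vee(b\wedge c)$; the set-theoretic version then follows automatically.

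For the forward implication, assume $b\wedge(a\vee d)\le c$. The key computation uses modularity with the inclusion $d\le a\vee d$: applying the modular law $(d\vee y)\wedge(a\vee d)=d\vee\bigl(y\wedge(a\vee d)\bigr)$ with $y=b$ and using $d\vee b=1$ gives
\[
a\vee d = 1\wedge(a\vee d) = (d\vee b)\wedge(a\vee d) = d\vee\bigl(b\wedge(a\vee d)\bigr).
\]
From this I conclude $a\le a\vee d = d\vee\bigl(b\wedge(a\vee d)\bigr)\le d\vee(b\wedge c)$, where the last step uses the hypothesis $b\wedge(a\vee d)\le c$ together with $b\wedge(a\vee d)\le b$, so that $b\wedge(a\vee d)\le b\wedge c$.

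For the reverse implication, assume $a\le d\vee(b\wedge c)$. Joining with $d$ yields $a\vee d\le d\vee(b\wedge c)$, and meeting with $b$ gives $b\wedge(a\vee d)\le b\wedge\bigl(d\vee(b\wedge c)\bigr)$. Now I apply the modular law in the form $x\vee(y\wedge z) = (x\vee y)\wedge z$ with $x=b\wedge c\le b=z$ and $y=d$, which yields
\[
b\wedge\bigl(d\vee(b\wedge c)\bigr) = (d\wedge b)\vee(b\wedge c) = 0\vee(b\wedge c) = b\wedge c\le c,
\]
completing the argument.

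The only genuine obstacle is choosing the right instance of the modular law in each direction: in the forward direction one must spot that $d\le a\vee d$ lets modularity turn $a\vee d$ into $d\vee(b\wedge(a\vee d))$, producing $a$ on the left and exposing the hypothesis; in the reverse direction one must spot that $b\wedge c\le b$ lets modularity collapse $b\wedge(d\vee(b\wedge c))$ to $b\wedge c$. Everything else is bookkeeping with the element-by-element quantification over $b^+$.
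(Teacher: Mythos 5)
Your proof is correct and follows essentially the same route as the paper: unwinding the set order to a statement quantified over $d\in b^+$ and using the modular law with $d\vee b=1$ (forward) and with $b\wedge c\le b$, $d\wedge b=0$ (backward). The only cosmetic difference is that you state the equivalence pointwise for each fixed $d\in b^+$, which the paper's argument also does implicitly.
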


\begin{proof}
	If $a\odot b\le c$ then $b\wedge(a\vee x)\le c$ for all $x\in b^+$ and hence
	\[
	a\le a\vee x=1\wedge(a\vee x)=(x\vee b)\wedge(a\vee x)=x\vee\big(b\wedge(a\vee x)\big)=x\vee\Big(b\wedge\big(b\wedge(a\vee x)\big)\Big)\le x\vee(b\wedge c)
	\]
	for all $x\in b^+$ showing $a\le b\to c$. If, conversely, $a\le b\to c$ then $a\le x\vee(b\wedge c)$ for all $x\in b^+$ and hence
	\begin{align*}
		b\wedge(a\vee x) & \le b\wedge\Big(\big(x\vee(b\wedge c)\big)\vee x\Big)=b\wedge\big(x\vee(b\wedge c)\big)=\big((b\wedge c)\vee x\big)\wedge b= \\
		& =(b\wedge c)\vee(x\wedge b)=(b\wedge c)\vee0=b\wedge c\le c
	\end{align*}
	for all $x\in b^+$ showing $a\odot b\le c$.
\end{proof}

\section{Deductive systems}

Deductive systems are often introduced in algebras forming an algebraic formalization of a non-classical propositional calculus. These are subsets of the algebra in question containing the logical constant $1$ and representing the derivation rule Modus Ponens. Since our operator $\to$ shares a number of properties with the non-classical logical connective implication, we define this concept also for complemented lattices.

\begin{definition}
	A {\em deductive system} of a complemented lattice $\mathbf L=(L,\vee,\wedge,0,1)$ is a subset $D$ of $L$ satisfying the following conditions:
	\begin{itemize}
		\item $1\in D$,
		\item if $a\in D$, $b\in L$ and $a\to b\subseteq D$ then $b\in D$.
	\end{itemize}
\end{definition}

Since the intersection of deductive systems of $\mathbf L$ is again a deductive system of $\mathbf L$, the set of all deductive systems of $\mathbf L$ forms a complete lattice $\BDed\mathbf L$ with respect to inclusion with bottom element $\{1\}$ and top element $L$.

\begin{example}\label{ex2}
	The deductive systems of the lattice $\mathbf M_n$ for $n>1$ {\rm(}see Figure~1{\rm)} are given by $M_n$ and $A\cup\{1\}$ where $A$ is a proper subset of $\{a_1,\ldots,a_n\}$. This can be seen as follows. Let $i,j\in\{1,\ldots,n\}$ with $i\ne j$. Then we have
	\[
	\begin{array}{l|llll}
		\to & 0 & a_i & a_j & 1 \\
		\hline
		0   & 1     & 1     & 1     & 1 \\
		a_i & a_i^+ & 1     & a_i^+ & 1 \\
		a_j & a_j^+ & a_j^+ & 1     & 1 \\
		1   & 0     & a_i   & a_j   & 1
	\end{array}
	\]
Now let $D$ be a deductive system of $\mathbf M_n$. Then the following hold: \\
$1\in D$, \\
if $0\in D$ then $D=M_n$, \\
if $\{a_1,\ldots,a_n\}\subseteq D$ then $a_1\in D$ and $a_1\to0=a_1^+\subseteq D$ and hence $0\in D$ which implies $D=M_n$, \\
if $a_i\in D$ and $\{a_1,\ldots,a_n\}\not\subseteq D$ then $a_i^+\not\subseteq D$. \\
The rest follows from the table above. Moreover, $\BDed\mathbf M_n$ is a $2^n$-element Boolean algebra since
\[
A\mapsto\left\{
\begin{array}{ll}
	M_n        & \text{if }A=\{a_1,\ldots,a_n\}, \\
	A\cup\{1\} & \text{otherwise}
\end{array}
\right.
\]
is an isomorphism from $(2^{\{a_1,\ldots,a_n\}},\subseteq)$ to $\BDed\mathbf M_n$.
\end{example}

The relationship between deductive systems and filters is described in the following results.

\begin{lemma}
	Let $\mathbf L=(L,\vee,\wedge,0,1)$ be a complemented lattice and $D$ a deductive system of $\mathbf L$. Then the following holds:
	\begin{enumerate}[{\rm(i)}]
		\item $D$ is an order filter of $\mathbf L$,
		\item if $x\to y\subseteq D$ for all $x,y\in D$ then $D$ is a filter of $\mathbf L$.
	\end{enumerate}
\end{lemma}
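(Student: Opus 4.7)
For part (i), the plan is to apply the second axiom of deductive systems after observing that $a\to b$ collapses to $\{1\}$ whenever $a\le b$. Concretely, suppose $a\in D$, $b\in L$ and $a\le b$. By Theorem~\ref{th1}(ii) we have $a\to b=1$, which under the paper's convention of identifying singletons with their unique element means $a\to b=\{1\}$. Since $1\in D$, this gives $a\to b\subseteq D$, so the second axiom of deductive systems forces $b\in D$, proving that $D$ is closed upward.

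For part (ii), part (i) already supplies upward closure, so I only need to show that $D$ is closed under binary meet. Given $a,b\in D$, the key identity is
\[
a\to(a\wedge b)=a^+\vee\bigl(a\wedge(a\wedge b)\bigr)=a^+\vee(a\wedge b)=a\to b,
\]
which uses nothing beyond the absorption $a\wedge(a\wedge b)=a\wedge b$ and the definition of $\to$. The hypothesis $x\to y\subseteq D$ for all $x,y\in D$ then gives $a\to(a\wedge b)=a\to b\subseteq D$, and since $a\in D$, the second axiom of deductive systems delivers $a\wedge b\in D$. Combined with (i), this shows $D$ is a filter.

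I do not anticipate any real obstacle here: the whole argument rests on Theorem~\ref{th1}(ii) for (i) and on the elementary identity $a\to(a\wedge b)=a\to b$ for (ii), both of which are immediate from the definition of $\to$. The one subtlety to flag is the convention that a singleton is identified with its element, so that ``$a\to b=1$'' must be read as ``$a\to b=\{1\}$'' in order to verify the containment $a\to b\subseteq D$ demanded by the deductive system axiom.
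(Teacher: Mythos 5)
Your proof is correct and follows essentially the same route as the paper: part (i) uses $a\le b\Rightarrow a\to b=1\in D$ together with the second deductive-system axiom, and part (ii) uses the identity $a\to(a\wedge b)=a^+\vee(a\wedge b)=a\to b$ combined with the hypothesis and the same axiom. The remark about identifying the singleton $\{1\}$ with $1$ matches the paper's stated convention, so nothing is missing.
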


\begin{proof}
	Let $a,b\in L$.
	\begin{enumerate}[(i)]
		\item If $a\in D$ and $a\le b$ then $a\to b=1\in D$ and hence $b\in D$.
		\item According to (i), $D$ is an order filter of $\mathbf L$. If $a,b\in D$ then
\[
a\to(a\wedge b)=a^+\vee\big(a\wedge(a\wedge b)\big)=a^+\vee(a\wedge b)=a\to b\subseteq D
\]
showing $a\wedge b\in D$.
	\end{enumerate}
\end{proof}

If $\mathbf L$ is, moreover, modular then we can prove also the following.

\begin{proposition}
	Let $\mathbf L=(L,\vee,\wedge,0,1)$ be a complemented modular lattice and $F$ a filter of $\mathbf L$. Then $F$ is a deductive system of $\mathbf L$.
\end{proposition}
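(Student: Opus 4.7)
The two conditions to verify for $F$ to be a deductive system are that $1\in F$ and that $a\in F$ together with $a\to b\subseteq F$ forces $b\in F$. The first is immediate since every filter contains the top element.

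For the second, fix $a\in F$ and $b\in L$ with $a\to b\subseteq F$. My plan is to extract from $a\to b$ a single convenient element that, when meet with $a$, collapses to $a\wedge b$ via the modular law. Since $\mathbf L$ is complemented, $a^+\neq\emptyset$; choose any $x\in a^+$. Then $x\vee(a\wedge b)$ is an element of $a\to b$, hence lies in $F$. Because $F$ is a filter and $a\in F$ as well, we get
\[
a\wedge\bigl(x\vee(a\wedge b)\bigr)\in F.
\]

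Now I would invoke modularity: since $a\wedge b\le a$, the modular law gives
\[
a\wedge\bigl(x\vee(a\wedge b)\bigr)=(a\wedge x)\vee(a\wedge b)=0\vee(a\wedge b)=a\wedge b,
\]
using $x\in a^+$ so that $a\wedge x=0$. Therefore $a\wedge b\in F$, and since $F$ is an order filter and $a\wedge b\le b$, we conclude $b\in F$, as required.

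There is no serious obstacle here; the only thing to be slightly careful about is that $a^+$ is nonempty (guaranteed by the standing assumption that $\mathbf L$ is complemented) and that the correct orientation of the modular law is applied, namely $a\wedge b\le a$ so that the absorbed summand is on the side allowed to move out of the join. Everything else is a straightforward use of the filter axioms.
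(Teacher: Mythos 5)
Your proof is correct and takes essentially the same route as the paper: the paper simply invokes its Modus Ponens result (Theorem~\ref{th2}(i), $a\wedge(a\to b)=a\wedge b\le b$) instead of re-running the modular-law computation, whereas you inline that computation and use just one element $x\vee(a\wedge b)$ of $a\to b$ with $x\in a^+$. Both arguments then conclude via meet-closure and upward-closure of the filter $F$.
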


\begin{proof}
	If $a\in F$, $b\in L$ and $a\to b\subseteq F$ then according to Theorem~\ref{th2} (i) we have
	\[
		a\wedge b=a\wedge(a\to b)\subseteq F
	\]
	and due to $a\wedge b\le b$ we finally obtain $b\in F$.
\end{proof}

In the remaining part of this section we investigate when a given deductive system $D$ may induce an equivalence relation $\Phi$ such that $D=[1]\Phi$, i.e.\ $D$ being its kernel. We start with the following definition.

\begin{definition}\label{def1}
	For every complemented lattice $(L,\vee,\wedge,0,1)$ and every deductive system $D$ of $\mathbf L$ put
	\[
	\Theta(D):=\{(x,y)\in L^2\mid x\to y,y\to x\subseteq D\}.
	\]
\end{definition}

From Theorem~\ref{th1} (ii) we get that $\Theta(D)$ is reflexive and, by definition, it is symmetric.

It is easy to see that every congruence on a complemented modular lattice induces a deductive system.

\begin{proposition}\label{prop3}
Let $(L,\vee,\wedge,0,1)$ be a complemented modular lattice and $\Phi\in\Con(L,\wedge)$. Then the following holds:
\begin{enumerate}[{\rm(i)}]
	\item $[1]\Phi$ is a deductive system of $\mathbf L$,
	\item $\Theta([1]\Phi)\subseteq\Phi$.
\end{enumerate}
\end{proposition}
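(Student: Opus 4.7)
My plan is to reduce both (i) and (ii) to a single computation driven by Theorem~\ref{th2}~(i) (Modus Ponens): $a\wedge(a\to b)=a\wedge b$. Since $\mathbf L$ is complemented, $a^+\ne\emptyset$, so $a\to b=a^+\vee(a\wedge b)$ is nonempty for every $a,b\in L$, and the Modus Ponens identity then says that every individual element $c\in a\to b$ satisfies $a\wedge c=a\wedge b$. This is the only identity about $\to$ that I plan to use, which is important because $\Phi$ is only assumed to be a meet-semilattice congruence.

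For (i), $1\in[1]\Phi$ is immediate from reflexivity. Suppose $a\in[1]\Phi$, $b\in L$ and $a\to b\subseteq[1]\Phi$. Pick any $c\in a\to b$. Then $(c,1)\in\Phi$, and $\wedge$-compatibility of $\Phi$ gives $(a\wedge c,\,a\wedge 1)=(a\wedge c,a)\in\Phi$. But $a\wedge c=a\wedge b$ by Modus Ponens, so $(a\wedge b,a)\in\Phi$. On the other hand, from $(a,1)\in\Phi$ and $\wedge$-compatibility we also have $(a\wedge b,\,1\wedge b)=(a\wedge b,b)\in\Phi$. Symmetry and transitivity of $\Phi$ now chain these together with $(a,1)\in\Phi$ to yield $(b,1)\in\Phi$, i.e.\ $b\in[1]\Phi$.

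For (ii), let $(a,b)\in\Theta([1]\Phi)$, so both $a\to b\subseteq[1]\Phi$ and $b\to a\subseteq[1]\Phi$. Running the argument from (i) once on each side gives $(a\wedge b,a)\in\Phi$ and $(a\wedge b,b)\in\Phi$, and transitivity immediately yields $(a,b)\in\Phi$.

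The one subtle point, which is the closest thing to an obstacle, is the weakness of the hypothesis on $\Phi$: it is a $\wedge$-congruence, not a full lattice congruence, so the join appearing inside $a\to b=a^+\vee(a\wedge b)$ must not be manipulated modulo $\Phi$. Modus Ponens is exactly the tool that bypasses this, because meeting $a\to b$ with $a$ collapses the unwanted joins to a single meet $a\wedge b$ that $\Phi$ can handle. Modularity of $\mathbf L$ enters only implicitly through the invocation of Theorem~\ref{th2}~(i).
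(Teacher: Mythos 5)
Your proof is correct and follows essentially the same route as the paper: both rest on Theorem~\ref{th2}~(i) (Modus Ponens) to replace every element of $a\to b$ by $a\wedge b$ under meets, combined with the $\wedge$-compatibility of $\Phi$, and your part~(ii) is the same two-sided chaining through $a\wedge b$ used in the paper. Your explicit remarks on the nonemptiness of $a\to b$ and on why only $\wedge$-compatibility is needed are accurate refinements of the same argument.
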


\begin{proof}
	Let $a,b\in L$.
	\begin{enumerate}[(i)]
		\item We have $1\in[1]\Phi$, and if $a\in[1]\Phi$ and $a\to b\subseteq[1]\Phi$ then according to Theorem~\ref{th2} {\rm(i)} we conclude
	\[
	b=1\wedge b\mathrel\Phi a\wedge b=a\wedge(a\to b)\subseteq[1\wedge1]\Phi=[1]\Phi.
	\]
	\item If $(a,b)\in\Theta([1]\Phi)$ then $a\to b,b\to a\subseteq[1]\Phi$ and hence again according to Theorem~\ref{th2} {\rm(i)} we obtain
	\[
	a=a\wedge1\mathrel\Phi a\wedge(a\to b)=a\wedge b=b\wedge a=b\wedge(b\to a)\mathrel\Phi b\wedge1=b
	\]
	showing $(a,b)\in\Phi$.
	\end{enumerate}
\end{proof}

That not all deductive systems arise in the way shown in Proposition~\ref{prop3} (i) can be seen as follows: According to Example~\ref{ex2}, $\{a_1,a_2,1\}$ is a deductive system of $\mathbf M_3$, but there does not exist some $\Phi\in\Con(M_3,\wedge)$ satisfying $[1]\Phi=\{a_1,a_2,1\}$ since this would imply $0=a_1\wedge a_2\in[a_1\wedge a_1]\Phi=[a_1]\Phi=\{a_1,a_2,1\}$, a contradiction.

The previous proposition shows that we need a certain compatibility of the induced relation $\Theta(D)$ with the lattice operations in order to show $D$ to be the kernel of $\Theta(D)$. For this sake, we define the following properties.

\begin{definition}
Let $(L,\vee,\wedge,0,1)$ be a complemented lattice and $\Phi$ an equivalence relation on $L$. We say that $\Phi$ has the {\em Substitution Property with respect to $^+$} if
\[
(a,b)\in\Phi\text{ implies }a^+\times b^+\subseteq\Phi,
\]
and the {\em Substitution Property with respect to $\to$} if
\[
(a,b)\in\Phi\text{ implies }(a\to c)\times(b\to c)\subseteq\Phi\text{ for all }c\in L.
\]
\end{definition}

Such an equivalence relation $\Phi$ can be related with the equivalence relation induced by its kernel $[1]\Phi$ and, moreover, this kernel is a deductive system.

\begin{theorem}\label{th3}
Let $\mathbf L=(L,\vee,\wedge,0,1)$ be a complemented lattice and $\Phi$ an equivalence relation on $L$ having the Substitution Property with respect to $\to$. Then the following holds:
\begin{enumerate}[{\rm(i)}]
	\item $\Phi$ has the Substitution Property with respect to $^+$,
	\item $[1]\Phi$ is a deductive system of $\mathbf L$,
	\item $\Phi\subseteq\Theta([1]\Phi)$.
\end{enumerate}	
\end{theorem}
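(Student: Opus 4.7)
The plan is to prove the three claims in sequence, with each one reducing to a carefully chosen specialization of the Substitution Property with respect to $\to$, combined with an identity from Theorem~\ref{th1} that rewrites $x\to c$ in elementary terms at a convenient value of $c$.

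For (i), I would invoke the identity $x\to0=x^+$ from Theorem~\ref{th1}(i). Given $(a,b)\in\Phi$, the Substitution Property with respect to $\to$ applied at $c=0$ yields $(a\to0)\times(b\to0)\subseteq\Phi$, which is precisely $a^+\times b^+\subseteq\Phi$. So (i) comes for free.

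For (ii), reflexivity of $\Phi$ gives $1\in[1]\Phi$ immediately. For the Modus~Ponens closure, suppose $a\in[1]\Phi$ and $a\to b\subseteq[1]\Phi$. I would apply the Substitution Property to $(a,1)\in\Phi$ at $c=b$, using the identity $1\to b=b$ from Theorem~\ref{th1}(i), obtaining $(x,b)\in\Phi$ for every $x\in a\to b$. The small point to verify is that $a\to b$ is non-empty, which holds because $\mathbf L$ is complemented, so $a^+\neq\emptyset$ and hence $a\to b=a^+\vee(a\wedge b)\neq\emptyset$. Picking any $x\in a\to b\subseteq[1]\Phi$ gives $(x,1)\in\Phi$, and transitivity with $(x,b)\in\Phi$ yields $(b,1)\in\Phi$, i.e.\ $b\in[1]\Phi$.

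For (iii), suppose $(a,b)\in\Phi$. Applying the Substitution Property at $c=b$ and using $b\to b=1$ (Theorem~\ref{th1}(ii)) produces $(x,1)\in\Phi$ for every $x\in a\to b$, so $a\to b\subseteq[1]\Phi$. Symmetrically, using $(b,a)\in\Phi$ at $c=a$ and $a\to a=1$ gives $b\to a\subseteq[1]\Phi$. By Definition~\ref{def1} this means $(a,b)\in\Theta([1]\Phi)$, establishing $\Phi\subseteq\Theta([1]\Phi)$. There is no serious obstacle here; the entire proof rests on the single tactic of evaluating the hypothesis at the ``right'' second argument ($c=0$, $c=b$, $c=a$) so that $\to$ collapses to $^+$, to the identity function, or to the constant $1$.
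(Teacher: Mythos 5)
Your proof is correct and follows essentially the same route as the paper: all three parts are obtained by specializing the Substitution Property with respect to $\to$ at $c=0$, $c=b$, $c=a$ and collapsing via Theorem~\ref{th1}(i)--(ii). Your explicit remarks on the non-emptiness of $a\to b$ and on reflexivity giving $1\in[1]\Phi$ are small details the paper leaves implicit, but they do not change the argument.
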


\begin{proof}
	Let $a,b\in L$.
	\begin{enumerate}[(i)]
		\item According to Theorem~\ref{th1} (i), $(a,b)\in\Phi$ implies $a^+\times b^+=(a\to0)\times(b\to0)\subseteq\Phi$.
		\item If $a\in[1]\Phi$ and $a\to b\subseteq[1]\Phi$ then for every $x\in a\to b$ we have $(1,x)\in\Phi$ and according to Theorem~\ref{th1} (i) also $(x,b)\in(a\to b)\times(1\to b)\subseteq\Phi$ showing $b\in[1]\Phi$.
		\item According to Theorem~\ref{th1} (ii), $(a,b)\in\Phi$ implies
		\begin{align*}
			(a\to b)\times\{1\} & =(a\to b)\times(b\to b)\subseteq\Phi, \\
			(b\to a)\times\{1\} & =(b\to a)\times(a\to a)\subseteq\Phi.
		\end{align*}
	\end{enumerate}
\end{proof}

Now we are able to relate deductive systems with equivalence relations induced by them provided these deductive systems satisfy a certain compatibility condition defined as follows.

\begin{definition}
Let $\mathbf L=(L,\vee,\wedge,0,1)$ be a complemented lattice and $D$ a deductive system of $\mathbf L$. We call $D$ a {\em compatible deductive system} of $\mathbf L$ if it satisfies the following two additional conditions for all $a,b,c,d\in L$:
\begin{itemize}
	\item If $a\to b\subseteq D$ and $x\to(c\to d)\subseteq D$ for all $x\in a\to b$ then $c\to d\subseteq D$,
	\item if $a\to b,b\to a\subseteq D$ then $x\to(b\to c)\subseteq D$ for all $x\in a\to c$.
\end{itemize}
\end{definition}

Since the intersection of compatible deductive systems of $\mathbf L$ is again a compatible deductive system of $\mathbf L$, the set of all compatible deductive systems of $\mathbf L$ forms a complete lattice with respect to inclusion with top element $L$.

Now we show that also conversely as in Theorem~\ref{th3}, a compatible deductive system induces an equivalence relation having the Substitution Property with respect to $\to$.

\begin{theorem}
Let $\mathbf L=(L,\vee,\wedge,0,1)$ be a complemented lattice and $D$ a compatible deductive system of $\mathbf L$. Then the following holds:
\begin{enumerate}[{\rm(i)}]
	\item $\Theta(D)$ is an equivalence relation on $L$ having the Substitution Property with respect to $\to$,
	\item $[1]\big(\Theta(D)\big)=D$.
\end{enumerate}
\end{theorem}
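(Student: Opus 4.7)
The plan is to establish (i) by separately proving transitivity and the Substitution Property with respect to $\to$, since reflexivity and symmetry of $\Theta(D)$ are already noted after Definition~\ref{def1}; for (ii) I will just compute $a\to 1$ and $1\to a$ using Theorem~\ref{th1}. The two compatibility clauses in the definition of a compatible deductive system are tailored so that the second one ``adds'' a layer of $\to$ inside a provable implication, while the first one ``removes'' such a layer using an already provable implication as driving hypothesis; both parts of (i) come down to applying one after the other.

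For transitivity, suppose $(a,b),(b,c)\in\Theta(D)$, so that $a\to b$, $b\to a$, $b\to c$, $c\to b\subseteq D$. The second compatibility clause, applied with the roles $(p,q,r)=(b,a,c)$ (its hypothesis then reads $b\to a,a\to b\subseteq D$, which holds), yields $x\to(a\to c)\subseteq D$ for every $x\in b\to c$. Combining this with $b\to c\subseteq D$ and feeding both into the first compatibility clause produces $a\to c\subseteq D$. A mirror argument, using $(p,q,r)=(b,c,a)$ in the second clause and then pivoting on $b\to a\subseteq D$ in the first clause, gives $c\to a\subseteq D$, so $(a,c)\in\Theta(D)$.

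For the Substitution Property, assume $(a,b)\in\Theta(D)$ and $c\in L$. The second clause with $(p,q,r)=(a,b,c)$ yields $x\to(b\to c)\subseteq D$ for all $x\in a\to c$, and with $(p,q,r)=(b,a,c)$ yields $y\to(a\to c)\subseteq D$ for all $y\in b\to c$. Unpacking the set-valued target via $x\to(b\to c)=\bigcup_{y\in b\to c}(x\to y)$, the first inclusion becomes $x\to y\subseteq D$ for every such pair, and the second analogously gives $y\to x\subseteq D$. Hence $(x,y)\in\Theta(D)$ for all $x\in a\to c$ and $y\in b\to c$, which is the desired compatibility with $\to$.

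For (ii), Theorem~\ref{th1}~(i) gives $1\to a=a$, while Theorem~\ref{th1}~(ii) applied to $a\le 1$ gives $a\to 1=1$, which lies in $D$ automatically. Consequently $(a,1)\in\Theta(D)$ reduces to $\{a\}\subseteq D$, i.e.\ $a\in D$, so $[1]\Theta(D)=D$. The main obstacle is purely bookkeeping: one must read each occurrence of $x\to(b\to c)\subseteq D$ as the quantified statement ``$x\to y\subseteq D$ for every $y\in b\to c$'' and match the two compatibility clauses to the correct $(p,q,r)$ roles, after which both parts fall out immediately.
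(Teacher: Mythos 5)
Your proof is correct and follows essentially the same route as the paper: transitivity and the Substitution Property are obtained by exactly the same pairings of the two compatibility clauses (including the union expansion $x\to(b\to c)=\bigcup_{y\in b\to c}(x\to y)$), and part (ii) is the same computation via Theorem~\ref{th1}~(i) and~(ii). No gaps to report.
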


\begin{proof}
	Let $a,b,c,d,e,f,g\in L$.
	\begin{enumerate}[(i)]
		\item As remarked after Definition~\ref{def1}, $\Theta(D)$ is reflexive and symmetric. Now assume $(a,b),(b,c)\in\Theta(D)$. Then $b\to a,a\to b\subseteq D$ and hence $x\to(a\to c)\subseteq D$ for all $x\in b\to c$. Because of $(b,c)\in\Theta(D)$ we have $b\to c\subseteq D$ and therefore $a\to c\subseteq D$. On the other hand $b\to c,c\to b\subseteq D$ which implies $x\to(c\to a)\subseteq D$ for all $x\in b\to a$ which together with $b\to a\subseteq D$ yields $c\to a\subseteq D$. This shows $(a,c)\in\Theta(D)$, i.e.\ $\Theta(D)$ is transitive. Now assume $(d,e)\in\Theta(D)$. Then $d\to e,e\to d\subseteq D$ and hence $x\to(e\to f)\subseteq D$ for all $x\in d\to f$. Because of $e\to d,d\to e\subseteq D$ we have $y\to(d\to f)\subseteq D$ for all $y\in e\to f$. Since $g\to A=\bigcup\limits_{x\in A}(g\to x)$ for all $A\subseteq L$, we have $x\to y,y\to x\subseteq D$ for all $(x,y)\in(d\to f)\times(e\to f)$ and hence $(x,y)\in\Theta(D)$ for all $(x,y)\in(d\to f)\times(e\to f)$ proving $(d\to f)\times(e\to f)\subseteq\Theta(D)$. Therefore $\Theta(D)$ has the Substitution Property with respect to $\to$.
		\item According to Theorem~\ref{th1} (i) and (ii) the following are equivalent: $a\in[1]\big(\Theta(D)\big)$; $a\to1,1\to a\subseteq D$; $1,a\in D$; $a\in D$.
	\end{enumerate}
\end{proof}

{\bf Author Contributions} Both authors contributed equally to this manuscript.

{\bf Funding} This study was funded by the Czech Science Foundation (GA\v CR), project 24-14386L, and IGA, project P\v rF~2024~011.

{\bf Data Availability} Not applicable.

{\bf Declarations}

{\bf Conflict of interest} The authors declare that they have no conflict of interest.



Authors' addresses:

Ivan Chajda \\
Palack\'y University Olomouc \\
Faculty of Science \\
Department of Algebra and Geometry \\
17.\ listopadu 12 \\
771 46 Olomouc \\
Czech Republic \\
ivan.chajda@upol.cz

Helmut L\"anger \\
TU Wien \\
Faculty of Mathematics and Geoinformation \\
Institute of Discrete Mathematics and Geometry \\
Wiedner Hauptstra\ss e 8-10 \\
1040 Vienna \\
Austria, and \\
Palack\'y University Olomouc \\
Faculty of Science \\
Department of Algebra and Geometry \\
17.\ listopadu 12 \\
771 46 Olomouc \\
Czech Republic \\
helmut.laenger@tuwien.ac.at

\end{document}